\renewcommand*{\eqref}[1]{%
  \hyperref[{#1}]{\textup{\tagform@{\ref*{#1}}}}%
}
\newcommand{\be}{\begin{equation}}
\newcommand{\ee}{\end{equation}}
\newcommand{\beq}{\begin{eqnarray}}
\newcommand{\eeq}{\end{eqnarray}}
\newtheorem{thm}{Theorem}[section]
\newtheorem{lma}{Lemma}[section]
\theoremstyle{remark}
\newtheorem{rem}{Remark}[section]
\newtheorem{question}{Question}[section]
\numberwithin{equation}{section}
\def\be{\begin{equation}}
\def\ee{\end{equation}}
\def\bee{\begin{equation*}}
\def\eee{\end{equation*}}
\def\e{\varepsilon}
\def\ga{{\gamma}}
\def\R{\mathbb{R}}
\begin{document}
\title[]
{Metric limits of manifolds with \\ positive scalar curvature}

\author{Man-Chun Lee}
\address[Man-Chun Lee]{Department of Mathematics, The Chinese University of Hong Kong, Shatin, N.T., Hong Kong
}
\email{mclee@math.cuhk.edu.hk}

\author{Peter M. Topping}
\address[Peter M. Topping]{Mathematics Institute, Zeeman Building, University of Warwick, Coventry CV4 7AL}
\urladdr{https://warwick.ac.uk/fac/sci/maths/people/staff/peter\_topping/}


\date{14 November 2024}

\begin{abstract}
We show that any Riemannian metric conformal to the round metric on $\mathbb{S}^{n}$, for $n\geq 4$, arises as a limit of a sequence of Riemannian metrics of positive scalar curvature on $\mathbb{S}^{n}$ in the sense of uniform convergence of  Riemannian distance.
In particular, non-negativity of scalar curvature is not preserved under such limits.
\end{abstract}

\keywords{}

\maketitle{}

\markboth{}{Metric limits of manifolds with positive scalar curvature}

\section{Introduction}

Given a sequence of Riemannian manifolds satisfying some notion of positive curvature, 
that Gromov-Hausdorff converge to some other Riemannian manifold, one can ask whether the limit manifold inherits the nonnegativity of curvature. 
When one is considering positive sectional curvature then the theory of Alexandrov spaces can be invoked to give positive results of this form \cite{BBI}.
In the case of positive Ricci curvature then optimal transportation techniques
can be applied, see e.g. \cite{villani}.
For manifolds satisfying the PIC1 condition, the Ricci flow will give a positive answer to this question \cite{LT1}. 

Positive scalar curvature is too weak a condition to give such results. 
An arbitrary $n$-dimensional Riemannian manifold can be approximated by a graph
in the Gromov-Hausdorff sense, and the graph can be `fattened' to a nearby 
$n$-dimensional Riemannian manifold of positive scalar curvature by replacing the vertices by small spheres and replacing the edges by even smaller tubes (see e.g. 
\cite{gromov_survey}).
In such an example, the topology of the approximating manifolds is becoming infinitely complex in the limit.

In this note we consider what happens when we strengthen the notion of convergence so that the topology of the underlying manifolds is fixed. We then ask that the Riemannian distances converge uniformly. This type of convergence arises naturally in Ricci flow theory.
We prove that metrics of positive scalar curvature on a sphere of dimension at least four can be made to converge in this sense
to a completely arbitrary metric on the same sphere that is conformal to the round metric.

In the following we write $g_{0}$ for the standard round metric
on $\mathbb{S}^{n}$. We write $\mathcal{R}(g)$ for the scalar curvature of a metric $g$. 
\begin{thm}
\label{main}
Suppose 
$n\geq 4$, and $f\in C^0({\mathbb{S}^{n}})$. Then there exists a sequence of Riemannian metrics $g_i$ on ${\mathbb{S}^{n}}$ such that $\mathcal{R}(g_i)>0$ and 
such that $d_{g_i}$ converges uniformly on ${\mathbb{S}^{n}}\times {\mathbb{S}^{n}}$ to the Riemannian distance $d_f$ of the metric $e^{2f}g_{0}$. 
In particular, $({\mathbb{S}^{n}},d_{g_i})$ converges to $({\mathbb{S}^{n}},d_f)$ in the Gromov-Hausdorff sense as $i\to \infty$ with the same underlying manifold ${\mathbb{S}^{n}}$ for each $i$.
Moreover, there exists $C<\infty$ such that 
$$\frac{g_0}{C}\leq g_i\leq Cg_0$$
on $\mathbb{S}^n$ for all $i$.
\end{thm}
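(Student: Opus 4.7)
The plan is to first reduce, via approximation, to the case where $f$ is smooth, and then construct $g_i$ by a fine-scale modification of $h := e^{2f}g_0$ that enforces positive scalar curvature while preserving the distance function in the limit. For the reduction, approximate $f$ uniformly by smooth $f_k$; then $e^{2f_k}g_0 \to e^{2f}g_0$ as Riemannian metrics uniformly on $\mathbb{S}^n$, and consequently the Riemannian distances $d_{e^{2f_k}g_0}$ converge uniformly to $d_{e^{2f}g_0}$ on $\mathbb{S}^n\times\mathbb{S}^n$. A diagonal argument then reduces the theorem to the smooth case.

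For smooth $f$, observe that $h$ is uniformly bi-Lipschitz equivalent to $g_0$ with constant $e^{\|f\|_{\infty}}$, but its scalar curvature may be negative in places. In particular the $g_i$ cannot be expected to converge to $h$ in $C^0$, since under a uniform bi-Lipschitz bound the $C^0$ limit of metrics with positive scalar curvature must satisfy $\mathcal{R}\geq 0$ in a generalized sense. The $g_i$ must therefore remain uniformly bi-Lipschitz to $g_0$ while oscillating at some scale $\delta_i\to 0$. The natural construction is to introduce a fine network of small regions of $g_0$-diameter $\delta_i$ on which $h$ is replaced by a model PSC metric chosen so that (i) its scalar curvature is uniformly positive, (ii) it is uniformly bi-Lipschitz to $g_0$, and (iii) it transitions smoothly to $h$ across the region's boundary with scalar curvature remaining positive through the transition zone. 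The dimension condition $n\geq 4$ is decisive here: it provides enough codimension to build the local PSC model via a warped-product-style construction in which an auxiliary small factor contributes large positive scalar curvature to dominate any negative contribution coming from $h$ or from the transition region.

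The distance estimate $d_{g_i}\to d_h$ should then follow from a geodesic comparison: since the modifications are supported on regions of $g_0$-diameter $\delta_i$, any minimal $g_i$-curve can be compared with a minimal $h$-curve to within an $o(1)$ error using perturbation together with the uniform bi-Lipschitz bound. The two-sided control $g_0/C\leq g_i\leq Cg_0$ is maintained by design of the local model. The main obstacle, and the heart of the proof, is the local construction: producing a model metric on each small region satisfying (i)--(iii) simultaneously, in a form that is compatible with gluing to neighboring regions while staying uniformly bi-Lipschitz to $g_0$. It is exactly here that the hypothesis $n\geq 4$ is used essentially, to secure positive scalar curvature both on the interior of the model and across the transition, something which has no counterpart in the purely conformal regime and which is incompatible with the naive target $h$ when $\mathcal{R}(h)<0$.
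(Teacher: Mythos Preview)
Your proposal has a genuine gap at the level of the overall strategy, not just in the unspecified local construction. You propose to start from $h=e^{2f}g_0$ and modify it on a ``fine network of small regions of $g_0$-diameter $\delta_i$'', transitioning smoothly back to $h$ across each region's boundary. But if $\mathcal{R}(h)<0$ somewhere, it is negative on an open set, and any metric that agrees with $h$ (even to first order) along the boundaries or transition collars of your regions will inherit that negative scalar curvature there. If instead the regions are meant to cover $\mathbb{S}^n$ with overlaps, then ``transitions to $h$'' loses its meaning and you are really asking for a global PSC metric whose distance function is $C^0$-close to $d_h$---which is the theorem itself. Either way, the scheme of perturbing $h$ on small sets cannot produce $\mathcal{R}>0$ everywhere.

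The paper proceeds in the opposite direction: one starts from $g_0$, which already has positive scalar curvature, and never tries to make $g_i$ close to $h$ as a tensor. One chooses a finite family of pairwise disjoint great circles, numerous enough that any two points of $\mathbb{S}^n$ lie within $3\eta$ of a common circle, and in a thin tubular neighbourhood of each circle one \emph{shrinks} the metric in the tangential $\partial_s$ direction so that the induced one-dimensional metric on the circle equals that of $e^{2f}g_0$, while leaving $g_0$ untouched outside the tubes. A doubly-warped-product ansatz handles the tube; the hypothesis $n\ge 4$ enters because the $(n-2)(n-3)$ copies of the cross-sectional curvature $K_{ij}=\csc^2 r-\alpha\cot^2 r$ contribute a large positive term $\sim(1-\hat\alpha)/r^2$ that dominates the negative contributions from the interpolation. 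The lower distance bound comes from the tensor inequality $h\le(1+\e)g_\e$; the upper bound $d_{g_\e}(x,y)\le d_h(x,y)+C\e$ comes from chopping an $h$-geodesic into $\delta$-pieces and replacing each piece by a nearby arc of one of the shrunk great circles, whose $g_\e$-length equals its $h$-length by construction. In particular $g_i$ equals $g_0$ on most of the sphere; only $d_{g_i}$, not $g_i$, approaches $h$.
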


Clearly, for some choices of conformal factor $f$ the scalar curvature of $e^{2f}g_{0}$ will fail to be everywhere nonnegative.  


It may be worth stressing that in the theorem we are considering $C^0$ convergence of the Riemannian distance and not the  stronger notion of $C^0$ convergence of the Riemannian metric. In the latter case Gromov and Bamler \cite{gromov, bamler} showed that  nonnegativity of scalar curvature \emph{would} be inherited by the limit; see also \cite{Burkhardt}. 

In order to construct the approximating metrics $g_i$, we will take a tightly packed collection of great circles in the sphere and carefully shrink the metrics along the great circles to be more like they would be on the desired limit metric 
$e^{2f}g_{0}$, without breaking the positive scalar curvature. 
Given two points in an approximating $({\mathbb{S}^{n}},g_i)$, 
an almost-minimising path can then be constructed that makes many short trips along different great circles, broadly following the path of a minimising geodesic
within $({\mathbb{S}^{n}},e^{2f}g_{0})$. 
The shrinking process along great circles will be described in Section \ref{shrink_sect}.

The same techniques will yield the following analogue on the torus.
This time $g_0$ is any flat metric on $T^{n}$.
\begin{thm}
\label{main_torus}
Suppose 
$n\geq 4$, and $f\in C^0({T^{n}})$. Then there exists a sequence of Riemannian metrics $g_i$ on ${T^{n}}$ such that $\mathcal{R}(g_i)>-\frac{1}{i}$ and 
such that $d_{g_i}$ converges uniformly on ${T^{n}}\times {T^{n}}$ to the Riemannian distance $d_f$ of the metric $e^{2f}g_{0}$. 
The metrics $g_i$ are uniformly equivalent to $g_0$ in the sense that for some $C<\infty$
we have $\frac{g_0}{C}\leq g_i\leq Cg_0$ for all $i$.
\end{thm}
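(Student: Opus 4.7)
The plan is to mimic the strategy announced for Theorem \ref{main}, in which $e^{2f}g_0$ on $\mathbb{S}^n$ is approximated by a directional shrinking of the metric along a tightly packed family of great circles. For the torus, the analogous skeleton will be a family of closed geodesics of the flat metric $g_0$. Although a given rational direction on $T^n$ gives only a foliation by parallel closed geodesics (rather than a family meeting every pair of points, as on the sphere), one can combine foliations in several rational directions, for instance those of a basis of the defining lattice, so that any two points can be joined by a path that is piecewise a segment of one of these closed geodesics.

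My first step is therefore to fix such a finite collection of rational directions and, for each $i$, to select a sufficiently dense subfamily of the resulting closed geodesics, with density tending to infinity as $i\to\infty$. I then apply the shrinking procedure of Section \ref{shrink_sect} along each selected geodesic, obtaining a metric $g_i$ that, along those curves, behaves like $e^{2f}g_0$. Since the shrinking is a local construction transverse to the curves, the mechanism carries over from the sphere setting with only notational changes.

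The second step, and the principal obstacle, is the scalar curvature bound. On $\mathbb{S}^n$ the positive background $\mathcal{R}(g_0)$ absorbs the negative scalar curvature generated by the shrinking and yields $\mathcal{R}(g_i)>0$. On the flat torus there is no such buffer, so a strict positivity statement is unavailable. However, inspection of the shrinking construction should show that the negative scalar curvature it generates is controlled by a parameter governing the strength of the shrinking (how rapidly the metric is deformed and over how small a tube around each geodesic). By choosing this parameter to depend on $i$ and shrink slowly enough while still producing the correct distance in the limit, one forces the negative contribution to be at most $1/i$, giving $\mathcal{R}(g_i)>-1/i$ as required.

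The remaining claims about distance convergence and uniform equivalence with $g_0$ should then follow exactly as in Theorem \ref{main}: an almost-minimising path in $g_i$ between two given points is assembled from many short segments along the chosen closed geodesics, broadly tracking a $e^{2f}g_0$-geodesic between those points, while the uniform two-sided bound $g_0/C\leq g_i\leq Cg_0$ is built into the shrinking construction. The delicate point throughout is the simultaneous tuning of parameters so that the geodesics are dense enough to force $d_{g_i}\to d_f$, the shrinking is aggressive enough to reproduce the conformal factor $e^{2f}$, yet gentle enough to keep the scalar curvature above $-1/i$; these three requirements pull against one another, and arranging them compatibly is where the real work lies.
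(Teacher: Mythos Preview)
Your overall strategy matches the paper's, but there is one genuine gap and one point where your intuition about the mechanism is off.

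The gap concerns your choice of geodesics. You propose to fix once and for all a finite set of rational directions (e.g.\ a lattice basis) and then, for each $i$, take denser parallel families in those fixed directions. This cannot force $d_{g_i}\to d_f$: with a fixed finite set of directions the shortest piecewise-in-those-directions path between two generic points exceeds the straight-line distance by a fixed factor bounded away from $1$ (think of the taxicab metric on $\mathbb{R}^2$), and that factor persists however dense the parallel families become. The paper instead chooses, for each $\varepsilon$, enough closed geodesics in enough rational directions that \emph{any} two nearby points can be perturbed onto a common geodesic, exactly as with great circles on the sphere; in effect the set of directions must become $\varepsilon$-dense in $\mathbb{S}^{n-1}$ and hence must grow as $i\to\infty$.

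On the scalar curvature, your diagnosis that one must ``shrink slowly enough'' is not quite the mechanism, and the three requirements do not really pull against one another. In the torus analogue of Lemma~\ref{building_block_lem} the full shrinking $\beta=f(s,0)$ along the geodesic is kept; the negative curvature it produces in zone~2 is absorbed, just as on the sphere, by the positive $K_{ij}=(1-\hat\alpha)/r^2$ coming from the warping $\alpha<1$, once the logarithmic cut-off parameter is taken small. The only place the scalar curvature actually drops below zero is zone~3, where $\alpha$ climbs back from $\hat\alpha$ to $1$ and $K_{ri}=-\alpha_r/(2r)$ becomes slightly negative. The paper controls this by taking $\hat\alpha=1-\varepsilon R^2/(5n)$, so that $\alpha_r\leq \varepsilon r/n$ suffices across zone~3 and hence $\mathcal{R}\geq -\varepsilon$ there. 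Thus the parameters are chosen sequentially without conflict: given $\varepsilon$, fix the geodesic family, then $R$ small enough for disjoint tubes, then $\hat\alpha$ from $\varepsilon$ and $R$, then the cut-off parameter from $\hat\alpha$ and $\bar f$.
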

In particular, the limit need  not be the flat metric as it would have to be for even stronger notions of convergence of $g_i$; see \cite{Burkhardt, bamler, gromov}. 
We will summarise the changes required to prove Theorem \ref{main_torus}
in Section \ref{torus_sect}. 
Given the form of the desired metric $e^{2f}g_0$, it is natural to ask whether the approximating metrics $g_i$ 
can also be taken to be conformal to $g_0$.
It follows from  work of Chu and the first named author \cite{ChuLee} that if a sequence of metrics $g_i$
on $T^n$ satisfy $\mathcal{R}(g_i)>-\frac{1}{i}$ 
and are all uniformly equivalent to $g_0$, as 
in Theorem~\ref{main_torus}, but are additionally all 
conformally equivalent, then $g_i$ will sub-converge to a flat torus in the measured Gromov-Hausdorff sense,
in stark contrast to Theorem~\ref{main_torus}.

\begin{rem}
Because the metrics $g_i$ in Theorems \ref{main} and \ref{main_torus} are uniformly bi-Lipschitz,
it follows from \cite[Theorem A.1]{HuangLeeSormani} that $g_i$ also converges to $e^{2f}g_0$ in the intrinsic flat sense. 
\end{rem}

Theorems \ref{main} and \ref{main_torus} require the dimension of the manifold to be at least four in order to shrink the metric in a given direction while preserving the scalar curvature lower bound based on a 
construction of the first author, Naber and Neumayer \cite{LNN}. This begs the question of what happens in three dimensions.
In contrast to Theorem \ref{main} we have:
\begin{question}
\label{Q1}
Suppose $g_i$ is a sequence of smooth metrics with $\mathcal{R}(g_i)\geq 0$ on a three-dimensional manifold $M^3$, such that $d_{g_i}$ converges uniformly on $M\times M$ to $d_{g_0}$, where $g_0$ is another smooth metric on $M$. 
Is it true that $\mathcal{R}(g_0)\geq 0$?
\end{question}

As mentioned above, if the notion of metric convergence is weakened to allow the topology to change then non-negative scalar curvature is not preserved even in three dimensions. Further results in the case that the topology is allowed to vary can be found in the work of Basilio-Sormani \cite{BasilioSormani}, Basilio-Dodziuk-Sormani \cite{BasilioDodziukSormani} and Basilio-Kazaras-Sormani \cite{BasilioKazarasSormani}. 
Very recently, the collapsing example in \cite{LNN} was generalized by  Kazaras-Xu \cite{KazarasXu} to three dimensions. Based on this, we expect Question \ref{Q1} to be false in general.
%
%
For a large number of other questions and problems concerning the issues addressed by Question \ref{Q1}, see \cite{gromov_survey} and \cite{Sormani}.

\bigskip
\noindent
\emph{Acknowledgements:} The authors would like to thank Brian Allen, Christina Sormani and Misha Gromov for useful comments. PT was supported by EPSRC grant EP/T019824/1.

\section{The building block}
\label{shrink_sect}
The essential building block used in the proof of Theorem \ref{main} is the following:

\begin{lma}
\label{building_block_lem}
Suppose $n\geq 4$, $\mathcal{C}$ is a great circle in $({\mathbb{S}^{n}},g_{0})$, 
$R\in (0,\frac{1}{100})$, and write $\mathcal{C}_R$ for the $R$-tubular neighbourhood of $\mathcal{C}$.
Suppose $f\in C^\infty({\mathbb{S}^{n}})$ with $f\leq -1$, and 
define $\bar{f}=\max (-f)$, so that $f\in [-\bar{f},-1]$.
Then for each $\e\in (0,1)$ we can find a new smooth Riemannian metric $g$ on ${\mathbb{S}^{n}}$ with the properties that 
\begin{enumerate}
\item
$\mathcal{R}(g)>0$
\item
$g=g_{0}$ outside $\mathcal{C}_R$
\item
$g\leq (1+\e)g_{0}$ throughout ${\mathbb{S}^{n}}$
\item
$e^{2f}g_{0}\leq (1+\e)g$, and in particular 
$e^{-2\bar{f}}g_{0}\leq (1+\e)g$
\item
The metrics on $\mathcal{C}$ induced by restricting 
$e^{2f}g_{0}$ and $g$ are equal. 
That is, both $e^{2f}g_{0}$ and $g$ agree on the length of   vectors tangent to $\mathcal{C}$.
\end{enumerate}
\end{lma}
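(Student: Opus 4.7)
The plan is to modify $g_{0}$ only inside $\mathcal{C}_{R}$ via a doubly-warped product ansatz in Fermi coordinates. Around $\mathcal{C}$ introduce coordinates $(s,r,\omega)$ where $s\in\mathbb{R}/2\pi\mathbb{Z}$ parametrises $\mathcal{C}$ by arclength, $r\in[0,R]$ is the distance to $\mathcal{C}$, and $\omega\in\mathbb{S}^{n-2}$ records the angular direction; in these coordinates the round metric takes the form $g_{0}=\cos^{2}(r)\,ds^{2}+dr^{2}+\sin^{2}(r)\,g_{\mathbb{S}^{n-2}}$. I would then look for
$$g=\phi(r,s)^{2}\,ds^{2}+dr^{2}+\psi(r)^{2}\,g_{\mathbb{S}^{n-2}},$$
imposing $\phi(r,s)=\cos r$ and $\psi(r)=\sin r$ for $r$ near $R$ so that $g$ extends smoothly by $g_{0}$ outside $\mathcal{C}_{R}$ (property~(2)), $\psi(0)=0$ and $\psi'(0)=1$ for smoothness at $\mathcal{C}$, and $\phi(0,s)=e^{f|_{\mathcal{C}}(s)}$ to secure~(5). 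A concrete choice for the first factor is $\phi(r,s)=e^{f|_{\mathcal{C}}(s)\,\chi(r)}\cos r$ for a smooth cutoff $\chi$ with $\chi(0)=1$ and $\chi\equiv 0$ near $r=R$.

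The central mechanism for achieving $\mathcal{R}(g)>0$ is to push $\psi$ strictly below $\sin r$ on an interior annulus of the tube. In the scalar curvature of such a doubly-warped metric the round $(n-2)$-sphere factor contributes a term
$$\tfrac{(n-2)(n-3)\bigl(1-(\psi')^{2}\bigr)}{\psi^{2}},$$
which for $\psi$ sufficiently below $\sin r$ gives a strongly positive boost of order $\csc^{2}(r)$. The coefficient $(n-2)(n-3)$ is positive precisely when $n\geq 4$, which is where the dimension hypothesis enters the proof. Condition~(4) permits $\psi/\sin r$ to dip as low as $e^{-\bar f}/\sqrt{1+\e}$ in the interior, so a substantial dip $\psi(r)=\delta\sin r$ on an annulus is allowed. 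I would use this boost to absorb the negative contributions $2\phi^{-1}\phi''$ and $2(n-2)\cot(r)\phi^{-1}\phi'$ coming from $\phi$, whose $r$-derivatives are forced because $\phi(\cdot,s)$ must drop from $\cos r\sim 1$ at $r=R$ down to $e^{f|_{\mathcal{C}}(s)}$ at $r=0$, a value that can be as small as $e^{-\bar f}$.

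The main obstacle is arranging the transition of $\phi$ and the dip in $\psi$ compatibly, so that $\mathcal{R}(g)>0$ holds pointwise for arbitrary $\bar f$ and arbitrarily small $R\in(0,1/100)$. I would confine the non-trivial part of $\phi$ (the support of $\chi'$) to a thin transition shell $r\in[\alpha,\beta]\subset[0,R]$, arrange the $\psi$-dip to be supported on the same shell, and couple the parameters $\alpha$, $\beta-\alpha$ and $\delta$ (in terms of $\bar f,\e,R,n$) so that the dominant bad term $2(n-2)\cot(r)\phi^{-1}\phi_{r}$ is pointwise dominated by the positive boost $\sim (n-2)(n-3)\delta^{-2}r^{-2}$ from the $\psi$-dip. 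The $s$-derivative terms of $\phi$ that enter $\mathcal{R}(g)$ remain uniformly bounded because $f|_{\mathcal{C}}\in C^{\infty}$ and $\phi\geq e^{-\bar f}$, so they can be absorbed harmlessly. Once $\mathcal{R}(g)>0$ is verified in each of the three natural regions---the inner plateau where $\phi$ is constant in $r$, the transition shell, and the outer region where $g$ agrees with $g_{0}$---properties~(3)--(5) follow directly from the explicit formulas for $\phi$ and $\psi$.
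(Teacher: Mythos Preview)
Your overall architecture matches the paper: Fermi/doubly-warped coordinates around $\mathcal{C}$, shrink the $ds^{2}$ factor down to $e^{2f|_{\mathcal C}}$ at $r=0$, and exploit the $(n-2)(n-3)K_{ij}$ term in the scalar curvature (hence $n\ge 4$) to absorb the bad derivatives coming from that shrinkage. But the mechanism you use to generate the $K_{ij}$ boost differs from the paper, and in the details there is a real gap.

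First, an error that undermines your parameter count. Since $\psi$ depends only on $r$ while $f$ depends on $(s,r)$, property~(4) in the $\mathbb{S}^{n-2}$ direction forces $\psi(r)^{2}\ge \tfrac{1}{1+\e}\sup_{s}e^{2f(s,r)}\sin^{2}r$. Because $f\le -1$ is the \emph{upper} bound on $f$, this gives $\psi/\sin r\ge e^{-1}/\sqrt{1+\e}$, not $e^{-\bar f}/\sqrt{1+\e}$. So the dip in $\psi$ is bounded independently of $\bar f$, and the boost you get from $K_{ij}$ is only of order $C_{n}/r^{2}$ with $C_{n}$ fixed, not $e^{2\bar f}/r^{2}$.

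With the boost thus bounded, your ``thin transition shell'' for $\chi$ cannot work when $\bar f$ is large. A standard cutoff on $[\alpha,\beta]$ of width $w$ forces $|\chi'|\sim 1/w$, $|\chi''|\sim 1/w^{2}$, so the negative contributions to $\mathcal R$ coming from $\phi$ are of order $\bar f^{2}/w^{2}$ (from $K_{rs}$) and $\bar f/(wr)$ (from $K_{is}$). To have $C_{n}/r^{2}$ dominate $\bar f^{2}/w^{2}$ pointwise on the shell you would need $w/r\gg \bar f$, impossible since $w\le \beta$ and $r$ ranges up to $\beta$. The paper's resolution---which is really the heart of the proof---is to use a \emph{logarithmic} cutoff $\varphi(r)=\eta\bigl(1+\delta\log(2r/R)\bigr)$ for the $ds^{2}$ interpolation, supported on $[\e_{0},R/2]$ with $\e_{0}=e^{-1/(2\delta)}R/2$; this yields $|\varphi'|\le C\delta/r$ and $|\varphi''|\le C\delta/r^{2}$, so the bad terms are $O(\delta\bar f^{2}/r^{2})$ and are absorbed by choosing $\delta$ small depending on $\bar f$.

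Finally, the paper does not dip $\psi$ at all. It keeps $\psi=\sin r$ and instead replaces $dr^{2}$ by $dr^{2}/\alpha(r)$ with $\alpha$ slightly below $1$ on the middle zone; this already gives $K_{ij}=\csc^{2}r-\alpha\cot^{2}r\ge (1-\hat\alpha)/r^{2}$ without introducing any new negative curvature. Your $\psi$-dip, by contrast, would create regions where $\psi''>0$ (hence $K_{ri}<0$) and, near the edges of the dip where $\psi\approx\sin r$, no compensating $K_{ij}$ boost is available yet. These transition regions would need their own delicate analysis that you have not addressed. The paper's $\alpha$-stretch avoids this difficulty entirely.
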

The lemma tells us that we can pull the great circle $\mathcal{C}$ tight so that distances along it are reduced to what they would be with respect to the shrunk metric $e^{2f}g_{0}$, 
without breaking the positive scalar curvature, and without changing the metric far from $\mathcal{C}$. 
This builds on a construction of the first author, Naber and Neumayer in \cite{LNN}.

\bigskip

\begin{proof}[Proof of Lemma \ref{building_block_lem}]
We begin by showing how the standard round metric on ${\mathbb{S}^{n}}$ can be scarred along the great circle $\mathcal{C}$ by pulling out a mountain ridge along 
$\mathcal{C}$
that is not very high or steep, but has very positive curvature along the top of the ridge. 
Thus locally to $\mathcal{C}$ the metric will look like the product of an interval with a cone over an $(n-2)$-dimensional sphere of radius a little below $1$, slightly smoothed.
The high curvature near the cone point, i.e., along the ridge, gives us some leeway to modify the metric.
In particular, it will allow us to substantially shrink the metric along the ridge without destroying the positive scalar curvature.

We will treat the sphere ${\mathbb{S}^{n}}$ as a doubly warped product manifold: Consider 
$$(r,x,y)\mapsto  \left(x\cdot \sin r, y\cdot \cos r\right) \in 
\mathbb{S}^{n}\subset \mathbb{R}^{n+1},$$
where $r\in (0,\frac\pi{2})$,  $x\in \mathbb{S}^{n-2}\subset \mathbb{R}^{n-1}$ and $y \in \mathbb{S}^1\subset\mathbb{R}^2$.  In these coordinates, the spherical metric $g_{0}$ can be represented by 
\begin{equation}
\label{sph_metric}
g_{0}=dr^2+ \sin^2 r \cdot h_{\mathbb{S}^{n-2}}+\cos^2 r \cdot ds^2
\end{equation}
where we write $h_{\mathbb{S}^{n-2}}$ to denote the spherical metric on $\mathbb{S}^{n-2}$.  

We would like now to modify the round metric in a tubular neighbourhood of the great circle $\{r=0\}$,
which we may assume to be $\mathcal{C}$.
. We will adjust it within the region where $r\leq R$.
It suffices to prove the lemma for any smaller $R>0$ than given, so we take the opportunity to appeal to the uniform continuity of $f$ and reduce $R$ so that  
\begin{equation}
\label{R_unif_cty}
\textstyle d_{g_{0}}(x,y)\leq R\qquad\text{implies}\qquad |f(x)-f(y)|\leq \frac12 \log(1+\e).
\end{equation}

The modified metric will take the form
\begin{equation}
\label{ansatz}
g=\frac{dr^2}{\alpha(r)}+ \sin^2 r \cdot h_{\mathbb{S}^{n-2}}+e^{2\beta (s,r)} \cdot  ds^2.
\end{equation}
The (smooth) function $\alpha(r)$ is illustrated in Figure \ref{fig:alpha}.
The functions $\alpha$ and $\beta$ are specified and constrained as follows. 
Define 
\begin{equation}
\label{hat_alpha_choice}
\textstyle\hat\alpha=\max\{\frac{3}{4}, 1-\frac{\e}{2}, 1-\frac{R^2}5\} <1.
\end{equation}
For $\epsilon_0\in (0,R/2)$ to be chosen later, we divide the interval $[0,R]$ into three zones and insist on the following properties:
\begin{enumerate}
\item
Zone 1: For $r\in [0,\epsilon_0]$ we ask that $\alpha(r)\equiv 1$ for 
$r\in [0,\epsilon_0/2]$, and $\alpha_r\leq 0$.
We ask that $\beta_r\equiv 0$ throughout zone 1.
\item
Zone 2: For $r\in [\epsilon_0,R/2]$ we ask that $\alpha\equiv \hat\alpha$.
\item
Zone 3: For $r\in [R/2,R]$, we ask that $0\leq \alpha_r \leq r$
and $\beta(s,r)=\log\cos r$ as $\beta$ would be on the standard round spherical metric
\eqref{sph_metric}.
\item
For $r\in [R,\frac{\pi}{2}]$ we simply ask that $\alpha\equiv 1$ and 
$\beta(s,r)=\log\cos r$ to recover the round metric.
\end{enumerate}
This is all we ask for $\alpha(r)$, but we can only choose $\alpha$ in zone 3 at this stage because we have not yet specified $\epsilon_0$.
Note that the zone 3 constraint that $\alpha_r \leq r$ is easy to achieve by virtue of our insistence that $\hat\alpha\geq 1-\frac{R^2}{5}$ in \eqref{hat_alpha_choice}.
Additional constraints will be imposed on $\beta$ in due course.

\begin{figure}
\centering
\begin{tikzpicture}

\draw [->] (0,0) -- (11,0) node[below]{$r$} ;
\draw (10,0.1) -- (10,-0.1) node[below]{$\frac{\pi}{2}$};
\draw [->] (0,0) -- (0,4) node[left]{$\alpha(r)$} ;

\draw [thick] (4,3) -- (10,3);
\draw (4,0.1) -- (4,-0.1) node[below]{$R$};

\draw [thick] (0,3) node[left]{$1$} -- (0.3,3);
\draw [thick] (0.3,3) .. controls (0.5,3) and (0.4,2.5) .. (0.6,2.5);
\draw (0.6,0.1) -- (0.6,-0.1) node[below]{$\epsilon_0$};
\draw (0.3,0.1) -- (0.3,-0.1); 

\draw [thick] (0.6,2.5) -- (2,2.5);
\draw [dotted] (0.6,2.5) -- (0,2.5) node[left]{$\hat\alpha$};
\draw [dotted] (0,9/4) -- (11,9/4) node[right]{$\frac{3}{4}$};
\draw (2,0.1) -- (2,-0.1) node[below]{$\frac{R}{2}$};

\draw [thick] (2,2.5) .. controls (3,2.5) and (3,3) .. (4,3);

\draw [<->] (0,-1) node[below left]{Zone: } -- node[below]{$1$} (0.6,-1) ;
\draw [<->] (0.6,-1) -- node[below]{$2$} (2,-1) ;
\draw [<->] (2,-1) -- node[below]{$3$} (4,-1) ;

\end{tikzpicture}
\caption{Graph of the warping function $\alpha(r)$}
\label{fig:alpha}
\end{figure}
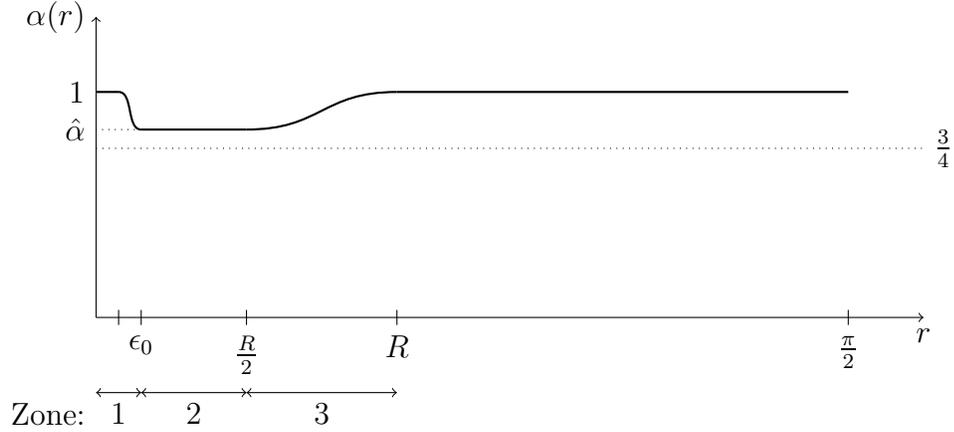

A short computation 
reveals that the sectional curvatures of $g$ are given by 
$$K_{ri}=\alpha-{\textstyle\frac12}\alpha_r\cot r$$
$$K_{rs}=-{\textstyle\frac12}\alpha_r \beta_r-\alpha \beta_{rr}-\alpha (\beta_r)^2$$
$$K_{is}=-\alpha \beta_r \cot r $$
$$K_{ij}=\csc^2 r - \alpha\cot^2 r,$$
%
and these can be combined to give the scalar curvature
\begin{equation}
\label{scalar_eq}
\mathcal{R}=2K_{rs}+2(n-2)K_{ri}+2(n-2)K_{is}+(n-2)(n-3)K_{ij}.
\end{equation}
This formula can be computed directly using standard formulae for  warped products, or weighted scalar curvature, but we find it illuminating to work directly with the sectional curvatures.

We will establish the  bounds  in Table \ref{table:sc} in the three zones.

\begin{table}[ht]
\caption{Sectional curvature lower bounds}
\centering
\begin{tabular}{c || c c c}
\hline\hline
Section & Zone 1 & Zone 2 & Zone 3 \\ 
\hline 
$K_{ri}$ & $\frac14$ & $\frac14$ & $\frac14$  \\ [1ex]
$K_{rs}$ & $0$ & $-\alpha (\beta_{rr}+(\beta_r)^2)$ & 0 \\ [1ex]
$K_{is}$ & 0 & 
$-\alpha \beta_r \cot r$
& 0 \\ [1ex]
$K_{ij}$ & 0 & $\frac{1-{\hat\alpha} }{r^2}$ & 0 \\ [1ex]
\hline
\end{tabular}
\label{table:sc}
\end{table}

By our assumptions, it is easy to verify that $K_{ri}\geq \frac14$ everywhere. 
Indeed, outside zone 3 we have $-{\textstyle\frac12}\alpha_r\cot r \geq 0$
so $K_{ri}\geq \alpha\geq \frac34$,
whereas in zone 3, because $\alpha_r\cot r\leq \frac{\alpha_r}{r}\leq 1$
we have $K_{ri}\geq \alpha-\frac12\geq \frac14$.
Therefore $K_{ri}$ always serves to make the scalar curvature more positive.

Second, our assumptions imply that $K_{rs}\geq 0$ outside zone 2.
Indeed, in zone 1 we have $\beta_r\equiv 0$, so $K_{rs}\equiv 0$ there.
In zone 3, because $\beta=\log\cos r$, the expression for the sectional curvature
simplifies to $K_{rs}=\alpha+\frac12 \alpha_r\tan r$, and both terms are nonnegative.
In zone 2, because $\alpha_r\equiv 0$, the expression for the sectional curvature simplifies to
\begin{equation}
\label{Krs}
K_{rs}=-\alpha (\beta_{rr}+(\beta_r)^2)
\end{equation}
but we will need to carefully absorb this (possibly large and negative)  term elsewhere.

Third, our assumptions imply that $K_{is}\geq 0$ outside zone 2.
Indeed, in zone 1 we have $\beta_r\equiv 0$, so $K_{is}\equiv 0$ there.
In zone 3, because $\beta=\log\cos r$, the expression for the sectional curvature
simplifies to $K_{is}\equiv\alpha\geq 0$.

Finally, we observe that 
\begin{equation}
\label{Kij}
K_{ij}=\frac1{\sin^2 r}\left( 1-\alpha \cos^2 r\right)\geq 0.
\end{equation}

At this point we have shown that outside zone 2, each of the sectional curvatures
considered is nonnegative and $K_{ri}\geq \frac14$, and so the scalar curvature
\eqref{scalar_eq}
of $g$ is positive outside zone 2.

Within zone 2 we must be a little more careful. 
First we observe that within zone 2 we can develop estimate \eqref{Kij} to
$$K_{ij}
\geq \frac{1-{\hat\alpha} }{r^2} ,$$
which is very large for small $r$, and we will arrange that it dominates the curvatures
$K_{rs}$ and $K_{is}$.
 
To this end, it will be useful to construct a cut-off function adapted to this situation. 
We start by picking a non-decreasing function $\eta\in C^\infty(\R,[0,1])$ with 
$\eta(x)=0$ for $x\leq \frac12$ and $\eta(x)=1$ for $x\geq 1$, as in Figure \ref{fig:eta}. We may as well assume that 
$\eta'\leq C$ and $|\eta''|\leq C$, for some universal constant $C$, so that our estimates do not depend on the choice of $\eta$ we make.

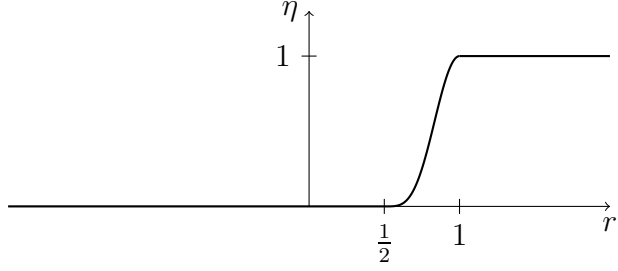
\begin{figure}
\centering
\begin{tikzpicture}[xscale=2,yscale=2]

\draw [->] (-2,0) -- (2,0) node[below]{$r$} ;
\draw (1,0.05) -- (1,-0.05) node[below]{$1$};
\draw (0.5,0.05) -- (0.5,-0.05) node[below]{$\frac{1}{2}$};
\draw [->] (0,0) -- (0,1.3) node[left]{$\eta$} ;
\draw (0.05,1) -- (-0.05,1) node[left]{$1$};

\draw [thick] (-2,0) -- (0.5,0);
\draw [thick] (1,1) -- (2,1);

\draw[thick, domain=0.5:1, samples=500] 
plot (\x, {
((1-cos((\x-0.5)*360))/2)^2
});

\end{tikzpicture}
\caption{Graph of the smooth cut-off function $\eta$}
\label{fig:eta}
\end{figure}

For a tiny $\delta\in (0,1)$ to be chosen in a moment, define the scaled cut-off
$\varphi\in C^\infty(\R,[0,1])$ by
$$\varphi(r):=\eta\left(1+\delta\log\frac{2r}{R}\right),$$
as illustrated in Figure \ref{fig:vph}.

\begin{figure}
\centering
\begin{tikzpicture}[xscale=2,yscale=2]

\draw [->] (-2,0) -- (2,0) node[below]{$r$} ;
\draw (1,0.05) -- (1,-0.05) node[below]{$\frac{R}{2}$};
\draw [->] (0,0) -- (0,1.3) node[left]{$\varphi$} ;
\draw (0.05,1) -- (-0.05,1) node[left]{$1$};

\draw ({exp((0.5-1)/0.15)},0.05) -- ({exp((0.5-1)/0.15)},-0.05) node[below]{$\epsilon_0$};

\draw [thick] (-2,0) -- ({exp((0.5-1)/0.15)},0);
\draw [thick] (0.98,1) -- (2,1);

\draw[thick, domain=0.5:1, samples=200] 
plot ({exp((\x-1)/0.15)}, {
((1-cos((\x-0.5)*360))/2)^2
});

\end{tikzpicture}
\caption{Graph of the smooth cut-off function $\varphi$}
\label{fig:vph}
\end{figure}
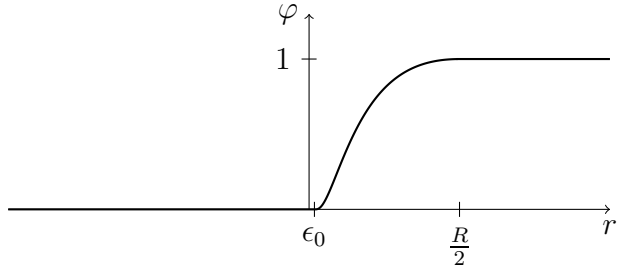

Then $\varphi$ is also non-decreasing, with 
$\varphi(r)=0$ for 
\begin{equation}
\label{ep0_def}
\textstyle
r\leq \epsilon_0:=e^{-\frac{1}{2\delta}}\frac{R}{2},
\end{equation}
i.e. in zone 1,
but now with $\varphi(r)=1$ for $r\geq \frac{R}{2}$, i.e. in zone 3.
We observe that 
$$0\leq \varphi'(r) \leq \frac{C\delta}{r}
\qquad\text{ and }\qquad
|\varphi''(r)|\leq \frac{C\delta}{r^2}$$
for some universal $C$.
Define
\begin{equation}
\label{beta_def}
\beta(s,r)=\varphi(r)\log\cos r + (1-\varphi(r))f(s,0),
\end{equation}
where $f(s,0)$ refers to the conformal scaling factor from the theorem, restricted to the great circle $\{r=0\}$.
That is, we interpolate between $f(s,0)$ near $r=0$ and $\log\cos r$ for $r\geq \frac{R}{2}$, and note that for  $0<r\leq R<\frac{1}{100}$, we have 
$$\log\cos r \geq \log\cos \frac{1}{100} > -1 \geq f(s,0)$$ 
and so 
\begin{equation}
\label{beta_bd}
f(s,0)\leq \beta(s,r)\leq \log\cos r.
\end{equation}
Then in zone 1 we have $\beta(s,r)=f(s,0)$ (so $\beta_r=0$ as specified) and 
in zone 3 we have $\beta(s,r)=\log\cos r$, also as specified.
We compute 
\begin{equation}
\label{beta_r_form}
\beta_r=\varphi'(r)\big[\log\cos r - f(s,0)\big]   -\varphi(r)\tan r
\end{equation}
and so in zone 2 we have 
$$\beta_r \leq -\varphi'(r) f(s,0)  \leq \frac{C\delta \bar{f}}{r}$$
for universal $C$ 
(recall $-f\leq \bar{f}$). 
This gives us a lower bound in zone 2 of $K_{is}\geq -\frac{C\delta \bar{f}}{r^2}$.
In the other direction, we have
$$\beta_r \geq \varphi'(r)\big[\log\cos R +1\big] -\tan R \geq -\tan R 
\geq -\frac{1}{10},$$
say, because $f\leq -1$ and $R<\frac{1}{100}$.
Meanwhile
\begin{equation}
\begin{aligned}
\beta_{rr} &=
\varphi''(r)\big[\log\cos r - f(s,0)\big]   
-2\varphi'(r)\tan r   
-\varphi(r)\sec^2 r\\
&\leq |\varphi''(r)|\big[ -\log\cos R + \bar{f}\big] 
\end{aligned}
\end{equation}
and so in zone 2 we have
$$\beta_{rr}\leq \frac{C\delta \bar{f}}{r^2},$$
because $-\log\cos R\leq -\log\cos \frac{1}{100} < 1 \leq \bar{f}$.
Combining, we find from \eqref{Krs} that 
$$K_{rs}=-\alpha (\beta_{rr}+(\beta_r)^2)\geq - \frac{C\delta \bar{f}^2}{r^2}-\frac{1}{100}$$
in zone 2, where we are using that $\delta<1$ and $\bar{f}\geq 1$, and $C$ is a different universal constant each time.
This new information allows us to revise our sectional curvature lower bounds 
as in Table \ref{table:sc2}.
\begin{table}[ht]
\caption{Sectional curvature lower bounds - revised}
\centering
\begin{tabular}{c || c c c}
\hline\hline
Section & Zone 1 & Zone 2 & Zone 3 \\ 
\hline 
$K_{ri}$ & $\frac14$ & $\frac14$ & $\frac14$  \\ [1ex]
$K_{rs}$ & $0$ & $-\frac{C\delta \bar{f}^2}{r^2}-\frac{1}{100}$ & 0 \\ [1ex]
$K_{is}$ & 0 & $-\frac{C\delta \bar{f}}{r^2}$ & 0 \\ [1ex]
$K_{ij}$ & 0 & $\frac{1-{\hat\alpha} }{r^2}$ & 0 \\ [1ex]
\hline
\end{tabular}
\label{table:sc2}
\end{table}
Keeping in mind that the scalar curvature is given by \eqref{scalar_eq},
we find that for sufficiently small $\delta$, depending only on $\bar{f}$ and ${\hat\alpha}$,
we have $\mathcal{R}>0$. Fixing such a $\delta>0$, we obtain Part (1) of the lemma.

Note that only now that we have picked $\delta$ is $\epsilon_0$ determined by 
\eqref{ep0_def} and $\beta$ is fixed. That allows us to fix a specific function 
$\alpha$ in zone 1, i.e. any non-increasing function that is identically $1$ on $[0,\epsilon_0/2]$ and ends up at $\hat\alpha$ by $r=\epsilon_0$.

Part (2) of the lemma is immediate by construction. 
To see part (3) of the lemma, note that the ansatz \eqref{ansatz} and 
\eqref{beta_bd} means that the only part of the metric that can expand compared with the standard metric $g_0$ is the $dr^2$ part. Indeed, because $\alpha(r)\geq \hat\alpha$, 
we have 
$g\leq \frac{1}{\hat\alpha}g_{0}$, and because
$\hat\alpha\geq 1-\frac{\e}{2}$ by \eqref{hat_alpha_choice} 
we deduce that 
$g\leq (1+\e)g_{0}$ throughout ${\mathbb{S}^{n}}$, as claimed in Part (3).
(Note that  $\frac{1}{1-\frac{\e}{2}}\leq 1+\e$ when $\e\in (0,1)$.)

To see Part (4), first note that by Part (2) we need only consider the inequality within $\mathcal{C}_R$. 
Within $\mathcal{C}_R$, the metric $g$ is larger even than the round metric $g_0$, let alone $e^{2f}g_0$, except possibly in the $s$ direction. Indeed,
by the ansatz \eqref{ansatz}, the second inequality \eqref{beta_bd} for $\beta$, and the fact that $\alpha\leq 1$, we see that
$$g\geq e^{2(\beta(s,r)-\log\cos r)}g_0,$$
and so because $-\log\cos r\geq 0$, and by the first inequality \eqref{beta_bd} for $\beta$, we obtain
$$g\geq e^{2f(s,0)}g_0$$
within $\mathcal{C}_R$.
The uniform continuity of \eqref{R_unif_cty} tells us that 
$$\textstyle |f(s,0)-f(s,r)|<\frac12\log(1+\e),$$ 
and in particular that
$$e^{2f(s,0)}\geq e^{2f(s,r)-\log(1+\e)}$$
and hence
$$(1+\e)g\geq e^{2f}g_0$$
as claimed in Part (4).

Part (5) is immediate from the ansatz \eqref{ansatz} and the definition \eqref{beta_def}
of $\beta$.
\end{proof}


\section{Construction of the approximating metrics}

Given a metric $h=e^{2f}g_{0}$ and two nearby points $x,y\in {\mathbb{S}^{n}}$, the (shorter) arc of the great circle connecting $x$ and $y$ has $h$-length roughly equal to $d_h(x,y)$ as articulated in the following lemma.
\begin{lma}
\label{almost_geod_lem}
Suppose $h=e^{2f}g_{0}$, where $f\in C^0(\mathbb{S}^{n})$,
and suppose $\e>0$. Then there exists $\delta\in (0,1)$ such that if $x,y\in \mathbb{S}^{n}$ with
$d_h(x,y)\leq 2\delta$, then
$$L_h(\mathcal{C}_{x,y})\leq (1+\e)d_h(x,y).$$
where $\mathcal{C}_{x,y}$ is a minimising geodesic, with respect to the round metric, connecting $x$ and $y$.
\end{lma}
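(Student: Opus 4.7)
The plan is to reduce the problem locally to the case where the conformal factor is constant, which is exact because a constant conformal change of $g_{0}$ has the same geodesics as $g_{0}$ itself. Since $f$ is continuous on the compact sphere, setting $\bar f:=\max|f|$ the metrics $h$ and $g_{0}$ are uniformly equivalent, with $e^{-\bar f}|v|_{g_{0}}\le|v|_{h}\le e^{\bar f}|v|_{g_{0}}$ and hence lengths of any curve differing by at most a factor $e^{\bar f}$; in particular $d_{h}$ and $d_{g_{0}}$ are comparable. The only obstruction to an exact statement is that $f$ is not constant, but uniform continuity makes $f$ essentially constant on small balls, and I shall choose $\delta$ small enough that all relevant curves sit in such a ball.

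Concretely, given $\e$ I would pick $\e'>0$ with $e^{2\e'}\le 1+\e$, then use uniform continuity of $f$ to choose $\rho>0$ so that $d_{g_{0}}(a,b)<\rho$ forces $|f(a)-f(b)|<\e'$, and finally take $\delta\le \rho/(8e^{\bar f})$. If $d_{h}(x,y)\le 2\delta$ then $d_{g_{0}}(x,y)\le 2e^{\bar f}\delta\le \rho/4$, so the round geodesic $\mathcal{C}_{x,y}$ stays inside $B:=B_{g_{0}}(x,\rho/2)$; moreover, any curve from $x$ to $y$ of $h$-length at most $3\delta$ has $g_{0}$-length at most $3e^{\bar f}\delta<\rho/2$, so it too stays in $B$, and in particular any $h$-almost-minimiser does.

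On $B$, writing $c:=f(x)$ and $\tilde h:=e^{2c}g_{0}$, we have $|f-c|<\e'$ pointwise, hence $e^{-\e'}\tilde h\le h\le e^{\e'}\tilde h$, so $h$- and $\tilde h$-lengths of any curve in $B$ differ by at most a factor $e^{\e'}$. Because $\tilde h$ is a constant multiple of $g_{0}$, the round geodesic $\mathcal{C}_{x,y}$ is itself $\tilde h$-minimising, so $L_{\tilde h}(\mathcal{C}_{x,y})=d_{\tilde h}(x,y)$. An $h$-almost-minimiser from $x$ to $y$ lies in $B$ and, taking infima in the comparison of lengths, gives $d_{\tilde h}(x,y)\le e^{\e'}d_{h}(x,y)$. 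Chaining these,
$$L_{h}(\mathcal{C}_{x,y})\le e^{\e'}L_{\tilde h}(\mathcal{C}_{x,y})=e^{\e'}d_{\tilde h}(x,y)\le e^{2\e'}d_{h}(x,y)\le (1+\e)\,d_{h}(x,y),$$
which is the desired bound.

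The main obstacle, though mild, is the localisation step: confirming that both the round geodesic $\mathcal{C}_{x,y}$ and an $h$-almost-minimiser remain inside the ball $B$ where $f$ is nearly constant. Once the dependence $\delta\lesssim \rho\,e^{-\bar f}$ is calibrated via the bi-Lipschitz comparison of $h$ and $g_{0}$, everything else is routine comparison of $h$, $\tilde h$, and $g_{0}$ on $B$.
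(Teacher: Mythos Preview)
Your proof is correct and follows essentially the same route as the paper's: both use uniform continuity of $f$ to freeze the conformal factor at $c=f(x)$, compare $h$ with the constant conformal metric $\tilde h=e^{2c}g_{0}$ (for which $\mathcal{C}_{x,y}$ is genuinely minimising), and then convert back via an $h$-minimising geodesic. The paper's write-up is slightly terser---it applies the oscillation bound directly along $\mathcal{C}_{x,y}$ and along an $h$-geodesic $\gamma_{x,y}$ without naming the ball $B$---while you make the localisation step (that both curves stay where $|f-c|<\e'$) explicit, which is arguably clearer.
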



\begin{proof}
Let $\bar{f}:=\max (-f)$. Because $f$ is continuous on a compact space, it is uniformly continuous, and so there exists $\delta>0$ so that 
\begin{equation}
\label{unif_cty}
\textstyle
d_{g_{0}}(\tilde x,\tilde y)\leq 2\delta e^{\bar{f}}
\qquad\text{ implies}\qquad
|f(\tilde x)-f(\tilde y)|<\frac12\log(1+\e).
\end{equation}

Notice that $d_{g_{0}}(x,y)\leq e^{\bar{f}}d_h(x,y)\leq 2\delta e^{\bar{f}}$, and so we can apply \eqref{unif_cty} with $\tilde x=x$ and $\tilde y$ an arbitrary point on $\mathcal{C}_{x,y}$ to obtain
\begin{equation}
\begin{aligned}
L_h(\mathcal{C}_{x,y}) &= L_{e^{2f}g_{0}}(\mathcal{C}_{x,y})\\
&\leq (1+\e)^\frac12 L_{e^{2f(x)}g_{0}}(\mathcal{C}_{x,y})\\
&= (1+\e)^\frac12 d_{e^{2f(x)}g_{0}}(x,y).
\end{aligned}
\end{equation}
Similarly, we can apply \eqref{unif_cty} with $\tilde x=x$ and $\tilde y$ an arbitrary point on a minimising $h$-geodesic $\ga_{x,y}$ from $x$ to $y$ to continue
\begin{equation}
\begin{aligned}
L_h(\mathcal{C}_{x,y}) 
&\leq (1+\e)^\frac12 L_{e^{2f(x)}g_{0}}(\gamma_{x,y})\\
&\leq (1+\e) L_{h}(\gamma_{x,y})\\
&=(1+\e)d_h(x,y)
\end{aligned}
\end{equation}
\end{proof}

\begin{proof}[Proof of Theorem~\ref{main}]
By mollification, it suffices to prove the theorem for smooth $f$.
By scaling, it suffices to consider the case 
$f\leq -1$. To see this, suppose  $f\leq L$ for some $L\in \mathbb{R}$. We may apply the assertion to $\tilde h:=e^{2(f-L-1)}g_{0}$ to obtain a sequence $\tilde g_i$ approaching $\tilde h$  with $\mathcal{R}(\tilde g_i)>0$. The rescaled sequence $g_i=e^{2(L+1)}\tilde g_i$  retains the positive scalar curvature and approaches the original desired destination metric $e^{2f}g_{0}$.
As earlier, we adopt the shorthand 
$$h:=e^{2f}g_{0}.$$

Suppose $\e\in (0,1)$ is arbitrary. The theorem will be proved if we can construct a metric $g_\e$ on ${\mathbb{S}^{n}}$ with the properties that
\begin{enumerate}
\item
$\mathcal{R}(g_\e)>0$
\item
$h\leq (1+\e)g_\e$ 
\item
$d_{g_\e}(x,y)\leq d_h(x,y)+C\e$ for some universal $C<\infty$ and all $x,y\in {\mathbb{S}^{n}}$.
\end{enumerate}

For the given $\e$, let $\delta>0$ be as in Lemma \ref{almost_geod_lem}.
We abbreviate $\eta=\e\delta/6$.

Our first task is to find a large number of disjoint great circles so that every pair of points in the sphere is close to one of them.
To do this, let $P\in\mathbb{N}$ be the largest number of pairwise disjoint $g_{0}$-geodesic balls of radius $\eta$ that can be squeezed into the sphere $\mathbb{S}^{n}$, 
and make a choice $\{B_{\eta}(w_i)\}_{i=1}^{i=P}$ of such balls. Thus every point in the sphere lies within a distance $2\eta$ of one such point $w_i$.
Taking each unordered pair $\{w_i,w_j\}$ of distinct points in turn, we choose a great circle that passes within a distance $\eta$ of both $w_i$ and $w_j$, but does not intersect any of the previously picked great circles. We end up with a finite set $\{\mathcal{C}_\gamma\}_{\gamma=1}^{N}$ of great circles so that given any pair of distinct points $x$ and $y$ in the sphere, both $x$ and $y$ lie within a $g_{0}$-distance $3\eta$ of a common great circle $\mathcal{C}_\gamma$. 

Pick $R\in (0,\frac{1}{100})$ sufficiently small so that the $R$-tubular neighbourhoods of the great circles $\mathcal{C}_\gamma$ are pairwise disjoint. 
Each of the tubular neighbourhoods can now be excised and replaced with the corresponding metric constructed in Lemma \ref{building_block_lem}. We call the resulting metric
$g_\e$ and claim that it enjoys the properties (1)-(3) listed above.

The positive scalar curvature follows immediately from Lemma \ref{building_block_lem},
as does the inequality $h\leq (1+\e)g_\e$. 
It remains to prove that for arbitrary $x,y\in {\mathbb{S}^{n}}$ we have
$d_{g_\e}(x,y)\leq d_h(x,y)+C\e$.

We first claim that if $d_h(\tilde x,\tilde y)\leq\delta$ then 
\begin{equation}
\label{claim}
d_{g_\e}(\tilde x,\tilde y)\leq d_h(\tilde x,\tilde y)+C\delta\e
\end{equation}
for universal $C$.
To prove the claim, we can adjust $\tilde x$ and $\tilde y$ to nearby points 
$a$ and $b$ (respectively) on one of the great circles
$\mathcal{C}$, with $d_{g_{0}}(\tilde x,a)\leq 3\eta$ and $d_{g_{0}}(\tilde y,b)\leq 3\eta$, so
\begin{equation}
\begin{aligned}
d_h(a,b) &\leq d_h(a,\tilde x)+d_h(\tilde x,\tilde y)+d_h(\tilde y,b)\\
&\leq 3\eta + \delta +3\eta\\
&\leq 2\delta
\end{aligned}
\end{equation}
where we recall that $d_h<d_{g_{0}}$ and 
$\eta=\e\delta/6\leq \delta/6$.
By 
the fact that 
$g_\e\leq (1+\e)g_{0}$ (which also follows from Lemma \ref{building_block_lem}) and so $d_{g_\e}\leq 2d_{g_{0}}$,
we compute
\begin{equation}
\begin{aligned}
d_{g_\e}(\tilde x,\tilde y) 
&\leq d_{g_\e}(\tilde x,a)+d_{g_\e}(a,b)+d_{g_\e}(b,\tilde y)\\
&\leq 2d_{g_{0}}(\tilde x,a)+L_{g_\e}(\mathcal{C}_{a,b})+2d_{g_{0}}(b,\tilde y)\\
&\leq 6\eta + L_{h}(\mathcal{C}_{a,b})+6\eta\qquad\text{(because $L_{h}(\mathcal{C}_{a,b})=L_{g_\e}(\mathcal{C}_{a,b})$)}\\
&\leq 12\eta + (1+\e)d_h(a,b)\qquad\text{(by Lemma \ref{almost_geod_lem})}\\
&\leq 12\eta + (1+\e)[d_h(a,\tilde x)+d_h(\tilde x,\tilde y)+d_h(\tilde y,b)]\\
&\leq C\eta + (1+\e)d_h(\tilde x,\tilde y)\\
&\leq d_h(\tilde x,\tilde y) + C\e\delta\\
\end{aligned}
\end{equation}
where $C$ is universal and we have used that $\eta=\e\delta/6$ and $d_h(\tilde x,\tilde y)\leq \delta$. This is precisely the claim \eqref{claim}.

Now let $\gamma: [0,d_h(x,y)]\to {\mathbb{S}^{n}}$ be a unit-speed minimising geodesic, with respect to $h$, connecting $x$ and $y$. 
We divide $\gamma$ up into $\delta$-chunks by defining
$x_k=\gamma(k\delta)$ for $k=0,...,\ell$, where $\ell=[d_h(x,y)/\delta]\leq C/\delta$, and add a final point
$x_{\ell+1}:=y$.
Then
\begin{equation}
\label{tri_ineq}
\begin{aligned}
d_{g_\e}(x,y) &\leq \sum_{k=0}^{\ell}d_{g_\e}(x_k,x_{k+1})\\
&\leq C\ell\delta\e+\sum_{k=0}^{\ell}d_{h}(x_k,x_{k+1})\qquad\text{(by the claim)}\\
&\leq C\e + d_{h}(x,y)
\end{aligned}
\end{equation}
where $C$ is always universal. Thus all properties (1)-(3) claimed above have been established.
\end{proof}

\section{The torus case}
\label{torus_sect}

In this section we outline the changes to the argument required in order to adapt the proof of Theorem \ref{main} to prove Theorem \ref{main_torus}.

In this case, instead of criss-crossing the sphere with great circles, we consider closed geodesics in the torus that are so numerous that any two points in the torus can be perturbed to lie on a common such geodesic.
We can then excise disjoint tubular neighbourhoods of these geodesics and replace them by shrunk metrics somewhat as in the spherical case of Theorem \ref{main}. One difference is that unlike in the case of the sphere, the torus has no positive scalar curvature with which errors can be absorbed. Therefore we must allow a small drop in the scalar curvature. The analogue of Lemma \ref{building_block_lem} is then:
\begin{lma}
\label{building_block_lem_torus}
Suppose $n\geq 4$, $\mathcal{C}$ is a simple closed geodesic in a flat $n$-dimensional manifold $M$, and $R>0$ is sufficiently small so that $\mathcal{C}$ admits a 
$R$-tubular neighbourhood $\mathcal{C}_R$.
Suppose $f\in C^\infty(M)$ with $f\leq -1$, and 
define $\bar{f}=\max (-f)$, so that $f\in [-\bar{f},-1]$.
Then for each $\e\in (0,1)$ we can find a new smooth Riemannian metric $g$ on 
$M$ with the properties that 
\begin{enumerate}
\item
$\mathcal{R}(g)\geq -\e$
\item
$g=g_{0}$ outside $\mathcal{C}_R$
\item
$g\leq (1+\e)g_{0}$ throughout $M$
\item
$e^{2f}g_{0}\leq (1+\e)g$, and in particular 
$e^{-2\bar{f}}g_{0}\leq (1+\e)g$
\item
The metrics on $\mathcal{C}$ induced by restricting 
$e^{2f}g_{0}$ and $g$ are equal. That is, they agree on the length of the vector 
$\partial_s$.
\end{enumerate}
\end{lma}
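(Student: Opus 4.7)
The plan is to mirror the proof of Lemma \ref{building_block_lem} as closely as possible, replacing the doubly-warped spherical ansatz by its flat-tube analogue. Near the simple closed geodesic $\mathcal{C}$ the flat metric can be written as
\[
g_0 = dr^2 + r^2\, h_{\mathbb{S}^{n-2}} + ds^2,
\]
where $r$ is the distance from $\mathcal{C}$ and $s$ is an arclength parameter along $\mathcal{C}$. We modify it by the ansatz
\[
g = \frac{dr^2}{\alpha(r)} + r^2\, h_{\mathbb{S}^{n-2}} + e^{2\beta(s,r)}\, ds^2,
\]
using the same three-zone architecture for $\alpha(r)$ and the same cut-off $\varphi(r)$ as in Lemma \ref{building_block_lem}, but with $\hat\alpha$ chosen slightly differently. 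Because the flat metric now corresponds to $\beta \equiv 0$ rather than $\beta = \log\cos r$, we set $\beta(s,r) = (1-\varphi(r))\, f(s,0)$, interpolating between $f(s,0)$ along $\mathcal{C}$ and $0$ outside $\mathcal{C}_R$.

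A direct computation gives the flat-tube sectional curvatures
\[
K_{ri} = -\tfrac{\alpha_r}{2r}, \qquad K_{rs} = -\tfrac12\alpha_r\beta_r - \alpha\bigl(\beta_{rr} + (\beta_r)^2\bigr), \qquad K_{is} = -\tfrac{\alpha \beta_r}{r}, \qquad K_{ij} = \tfrac{1-\alpha}{r^2},
\]
and the scalar curvature is still given by the combination \eqref{scalar_eq}. The Zone 1 and Zone 2 analyses then carry over almost verbatim: in Zone 1, $\beta_r \equiv 0$ and $\alpha_r \leq 0$ make every sectional curvature above nonnegative, so $\mathcal{R} \geq 0$; in Zone 2, the positive contribution $(n-2)(n-3)(1-\hat\alpha)/r^2$ from $K_{ij}$ dominates the negative contributions of order $\delta\bar f^2/r^2$ from $K_{rs}$ and $K_{is}$ once $\delta$ is chosen small enough. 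In fact the bounds on $\beta_r, \beta_{rr}$ are cleaner here, since no $\tan r$ or $\log\cos r$ terms appear.

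The principal new difficulty, and the reason we settle for $\mathcal{R} \geq -\e$ rather than strict positivity, lies in Zone 3. In the spherical case one automatically had the cushion $K_{ri} \geq \tfrac14$ throughout Zone 3, reflecting the intrinsic positive curvature of the sphere; this buffer is absent in the flat setting. With $\beta \equiv 0$ in Zone 3 we have $K_{rs} = K_{is} = 0$, but $\alpha_r \geq 0$ forces $K_{ri} \leq 0$, and the scalar curvature collapses to
\[
\mathcal{R} = \frac{n-2}{r^{n-2}}\,\frac{d}{dr}\!\left[(1-\alpha)\, r^{n-3}\right].
\]
A short analysis, integrating from $r$ to $R$ with the boundary value $\alpha(R)=1$, shows that one can find a smooth nondecreasing $\alpha$ on $[R/2,R]$ rising from $\hat\alpha$ to $1$ with $\mathcal{R} \geq -\e$ pointwise in Zone 3, provided $1-\hat\alpha \leq c_n \e R^2$ for a dimensional constant $c_n$ (together with a matching bound on $\alpha_r$). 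We therefore take $\hat\alpha = \max\{1-\e/2,\ 1 - c_n\e R^2/2\}$, and a standard bounded-derivative interpolation delivers the required $\alpha$ on Zone 3.

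Parts (2)--(5) then follow exactly as in Lemma \ref{building_block_lem}. Part (3) is immediate from $\alpha \geq \hat\alpha \geq 1-\e/2$ and $\beta \leq 0$; Part (4) uses $f(s,0) \leq \beta(s,r) \leq 0$ together with the uniform continuity of $f$ (after possibly shrinking $R$ as in \eqref{R_unif_cty}); Part (5) is immediate from $\beta(s,0) = f(s,0)$; and Part (2) is built into the ansatz. The principal obstacle is thus the quantitative Zone 3 analysis above: in contrast to the sphere, no intrinsic positive curvature is available to absorb the unavoidable dip in $K_{ri}$ when $\alpha$ returns to $1$, so one must take $1-\hat\alpha$ on the scale of $\e R^2$ so that the positive $K_{ij}$ alone can do the job up to an error of $\e$.
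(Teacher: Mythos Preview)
Your proposal is correct and follows essentially the same route as the paper: the same flat-tube ansatz, the same three-zone structure for $\alpha$, the same choice $\beta(s,r)=(1-\varphi(r))f(s,0)$, and the same identification of Zone~3 as the sole source of the $-\e$ error, with $1-\hat\alpha$ taken of order $\e R^2$. The only cosmetic difference is in Zone~3: the paper simply imposes $\alpha_r\leq \e r/n$ (feasible since $1-\hat\alpha=\e R^2/(5n)$), bounds $K_{ri}\geq -\e/(2n)$ directly, and discards the nonnegative $K_{ij}$ term to get $\mathcal{R}\geq 2(n-2)K_{ri}\geq -\e$; your integrated identity $\mathcal{R}=\frac{n-2}{r^{n-2}}\frac{d}{dr}\big[(1-\alpha)r^{n-3}\big]$ reaches the same conclusion by a slightly longer path.
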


In order to adapt the proof of Lemma \ref{building_block_lem}, we replace the ansatz
\eqref{ansatz} by
\begin{equation}
\label{ansatz_torus}
g=\frac{dr^2}{\alpha(r)}+ r^2 \cdot h_{\mathbb{S}^{n-2}}+e^{2\beta (s,r)} \cdot  ds^2.
\end{equation}
The flat torus case would be $\alpha\equiv 1$ and $\beta\equiv 0$.
We can now compute the sectional curvatures to be
$$K_{ri}=-\frac{\alpha_r}{2r}$$
$$K_{rs}=-{\textstyle\frac12}\alpha_r \beta_r-\alpha \beta_{rr}-\alpha (\beta_r)^2$$
$$K_{is}=-\frac{\alpha \beta_r}{r}$$
$$K_{ij}=\frac{1 - \alpha}{r^2}.$$
We can choose $\alpha$ essentially as before, although we drop the condition 
that $\hat\alpha\geq 3/4$ and ask that it is much closer to $1$ by insisting 
that $R\leq 1$ and defining
\begin{equation}
\label{hat_alpha_choice_torus}
\textstyle\hat\alpha=1-\frac{\e R^2}{5n} <1,
\end{equation}
which ensures, in particular, that $\hat\alpha\geq 1-\frac{\e}{2}$, but also allows
us to insist that 
$$\alpha_r\leq \frac{\e r}{n} $$ 
in zone 3, which forces a lower bound $K_{ri}\geq -\frac{\e}{2n}$
there.
%
%

We retain the condition $\beta_r\equiv 0$ in zone 1, and 
in zone 3 and beyond we now simply require $\beta\equiv 0$.
Thus $K_{rs}\equiv 0$ in zones 1 and 3 as before.
The general formula \eqref{beta_def} for $\beta$ simplifies to
\begin{equation}
\label{beta_def_torus}
\beta(s,r)=(1-\varphi(r))f(s,0),
\end{equation}
so the formula \eqref{beta_r_form} for $\beta_r$ simplifies to
$$\beta_r= -\varphi'(r))f(s,0).$$
The resulting upper bound for $\beta_r$ remains identical but the lower bound improves to $\beta_r\geq 0$.
The bound for $\beta_{rr}$ remains the same, but with a simpler derivation.

The resulting sectional curvature lower bounds are summarised in 
Table \ref{table:sc_torus}.
\begin{table}[ht]
\caption{Torus sectional curvature lower bounds}
\centering
\begin{tabular}{c || c c c}
\hline\hline
Section & Zone 1 & Zone 2 & Zone 3 \\ 
\hline 
$K_{ri}$ & $0$ & $0$ & $-\frac{\e}{2n}$  \\ [1ex]
$K_{rs}$ & $0$ & $-\frac{C\delta \bar{f}^2}{r^2}$ & 0 \\ [1ex]
$K_{is}$ & 0 & $-\frac{C\delta \bar{f}}{r^2}$ & 0 \\ [1ex]
$K_{ij}$ & 0 & $\frac{1-{\hat\alpha} }{r^2}$ & 0 \\ [1ex]
\hline
\end{tabular}
\label{table:sc_torus}
\end{table}
Given these bounds and the formula for the scalar curvature \eqref{scalar_eq}, 
for the given $\hat \alpha<1$, we can then pick $\delta>0$ sufficiently small so that the scalar curvature is positive in zone 2. Then \eqref{scalar_eq} implies that 
$\mathcal{R}\geq 0$ outside zone 3, while in zone 3 we have
$$\mathcal{R}\geq 2(n-2)K_{ri}\geq 
2(n-2)(-\frac{\e}{2n})
\geq -\e.$$

\end{document}